\documentclass[reqno,12pt]{amsart}
\usepackage{amsmath,amssymb,amsthm,mathtools}
\usepackage{mathrsfs}
\usepackage{bm}
\usepackage{tikz-cd}
\usepackage{booktabs}
\usepackage{xcolor}
\usepackage{hyperref}
\usepackage{float}
\usepackage{orcidlink}
\usepackage{comment}
\hypersetup{
    colorlinks=true,
    linkcolor=blue,
    citecolor=blue,
    urlcolor=blue}
\usepackage[a4paper,margin=1in]{geometry}
\newtheorem{theorem}{Theorem}[section]
\newtheorem{lemma}[theorem]{Lemma}
\newtheorem{proposition}[theorem]{Proposition}
\newtheorem{corollary}[theorem]{Corollary}

\theoremstyle{definition}
\newtheorem{thm}{Theorem}

\newtheorem{remark}[theorem]{Remark}

\allowdisplaybreaks

\newcommand{\Z}{\mathbb{Z}}

\title{On the Diophantine Equation $\displaystyle p^x+ (2p+1)^y =z^2$ with Consecutive Exponents}

\author{Subhasis Panda} 

\address{Department of Mathematics, School of Advanced Sciences, VIT-AP University, Amravati 522241, Andhra Pradesh, India}

\email{subhasispanda559@gmail.com}

\subjclass[2020]{Primary 11D61; Secondary 11D41, 11A41.}

\keywords{Exponential Diophantine equation, Lifting-the-exponent lemma.}

\begin{document}

\begin{abstract} { We study the Diophantine equation $p^x+ (2p+1)^y =z^2$ over positive integers $x$, $y$ and $z$ for every odd prime $p$. We prove that $(x,y,z)=(2,1,p+1)$ is the unique solution except possibly when $2p+1$ is composite. In that case, it reduces to a family depending on one parameter, and we show that the parameter must be odd and satisfies an explicit upper bound, reducing the problem to finitely many cases.
}\end{abstract}

\maketitle

\setcounter{tocdepth}{1}
\tableofcontents
\section{Introduction}
Diophantine equations of the form $a^x+b^y=z^2$ and $a^x-b^y=z^2$,
where $a$ and $b$ are fixed positive integers, have received a lot of attention \cite{BorahDutta2022,Le1995,MinaBacani2021,ZhangLi2024,Sroysang2012,GahaMezroui2026,ChoudhurySarma2026}. In most cases, the integers a and b are fixed, and the aim is to determine all positive integer solutions $(x,y,z)$ of the corresponding equation.

For example, Sroysang\cite{Sroysang2012,Sroysang2013} studied the equations $3^x + 5^y = z^2$ and $7^x + 8^y = z^2$. 
Suvarnamani \cite{Suvarnamani2014} studied the equation $p^x+(p+1)^y=z^2$, where $p$ is an odd prime, and proved that it has a unique positive integer solution when $p=3$. Subsequently, Choudhury-Sarma \cite{ChoudhurySarma2026} investigated the  equations $(p+a)^x+p^y=z^{2}$ and $(p+a)^x-p^y=z^{2}$, where $p$ and $p+a$ are odd primes.
Later, Mina and Bacani \cite{MinaBacani2021} considered the more general family $p^x+(p+4k)^y=z^2$, where both $p$ and $p+4k$ are prime, and established several results concerning the existence of positive integer solutions. Further, Zhang and Li \cite{ZhangLi2024} investigated a general class of Diophantine equations of the form $(-1)^\alpha p^x+(-1)^\beta\left(2^k(2p-1)\right)^y=z^2$, where $(p,2p-1)$ is a prime pair. Recently, Gaha and Mezroui \cite{GahaMezroui2026} studied the equation $43^x+87^y=z^2$, under the condition that $x$ and $y$ are consecutive integers. In this paper, we investigate the general family
\begin{equation} \label{equation1}
p^x + \left ( 2p +1 \right)^y = z^2    
\end{equation}
where $p$ is an odd prime and $|x-y|=1$, without assuming that $2p+1$ is prime. 
The condition $|x-y|=1$ naturally divides the problem into two cases according to whether $x=y+1$ or $y=x+1$.  
We show that $(x,y,z)=(2,1,p+1)$ is always a solution of the equation. 
Furthermore, we prove that any remaining solution can occur only when 2p+1 is composite, in which case the original equation reduces to an auxiliary equation. Our main result is the following.

\noindent \begin{thm} (Theorem~\ref{T3.1})
\label{A}
Let $p$ be an odd prime and suppose that
\begin{equation*}
p^x + \left ( 2p +1 \right)^y = z^2,
\end{equation*}
where $x$, $y$, $z$ are positive integers with $|x-y|=1$. Then exactly one of the following holds.
\begin{enumerate}
    \item[(i)] The solution is $(x, y, z) = (2, 1, p + 1)$.
    \item[(ii)] The integer $2p + 1$ is composite and admits a coprime factorization 
    \begin{equation*}
        2p + 1 = (2k - 1)(2k + 1),
    \end{equation*}
    where $p=2k^2-1$ and $(2k + 1)^{2b+1} - (2k - 1)^{2b+1} = 2(2k^2 - 1)^a$, with $b  < \lceil \log_2 (2k^2-1) \rceil$ leaving only a finite logarithmic range of exponents for each fixed prime.
\end{enumerate}
\end{thm}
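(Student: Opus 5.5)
We split according to the parity of $y$ and of $x$ (since $|x-y|=1$, exactly one of them is even), and treat each case by congruences followed by a factorization of a difference of squares.

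\emph{Parity and congruences.} Work modulo $4$ and modulo $p$. Suppose $y$ is even, so $x$ is odd. Reducing modulo $p$ gives $z^2\equiv 1$, which is harmless. Modulo $4$ we have $p^x\equiv p$ and $(2p+1)^y\equiv 1$, so $z^2\equiv p+1 \pmod 4$. This forces $p\equiv 3\pmod 4$. Now write $p^x=(z-(2p+1)^{y/2})(z+(2p+1)^{y/2})$. Since the two factors are coprime, we get $z-(2p+1)^{y/2}=1$ and $z+(2p+1)^{y/2}=p^x$. Hence $p^x-1=2(2p+1)^{y/2}$ with $x=y\pm1$. Compare sizes: $(2p+1)^{y/2}>p^{y/2}$, while $p^x\ge p^{y-1}$. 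Reducing modulo $p$ gives $-1\equiv 2\pmod p$, so $p=3$. For $p=3$ the equation is $3^x-1=2\cdot 7^{y/2}$, and a check modulo $3$ or $9$ together with Zsigmondy/LTE shows there is no solution.

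Suppose instead $x$ is even, so $y$ is odd. Factor
\[
(2p+1)^y=(z-p^{x/2})(z+p^{x/2}).
\]
The gcd of the two factors divides $2p^{x/2}$ and is odd and prime to $p$, so the factors are coprime. Write $z-p^{x/2}=u^y$ and $z+p^{x/2}=v^y$ with $uv=2p+1$ and $\gcd(u,v)=1$, which gives
\[
v^y-u^y=2p^{x/2}.
\]
If $u=1$, then $(2p+1)^y-1=2p^{x/2}$. Reducing modulo $p$ gives $0\equiv 2y\pmod p$ on the left-hand side. LTE gives $v_p((2p+1)^y-1)=1+v_p(y)$, while the size of the left-hand side forces $x/2$ to be roughly $y\log(2p+1)/\log p$. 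Combined with $x=y\pm1$, this forces $y=1$ and $x=2$, i.e. $2p=2p$. This gives $z=p+1$, which is case (i).

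If $u>1$, then $2p+1$ is composite. Moreover, $v-u$ divides $2p^{x/2}$, and $v-u$ is even and coprime to $p$ (a factor $p$ would give $p\mid u,v$). So $v-u=2$, hence $v=2k+1$, $u=2k-1$, and $p=2k^2-1$. Write $y=2b+1$ and $a=x/2$; this gives exactly the displayed equation of (ii).

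\emph{Bound on $b$.} Expand
\[
(2k+1)^{2b+1}-(2k-1)^{2b+1}=2\sum_{j\ \text{even}}\binom{2b+1}{j}(2k)^{2b+1-j}\ge 2(2b+1)(2k)^{2b}.
\]
Hence $p^a\ge (2b+1)(2k)^{2b}$. The condition $|x-y|=1$ gives $2a=2b+1\pm1$, so $a\in\{b,b+1\}$. Since $p=2k^2-1<(2k)^2/2$, we have $p^{b+1}<(2k)^{2b+2}/2^{b+1}$. Comparing with the lower bound gives
\[
(2b+1)2^{b+1}<4k^2=2p+2,
\]
so $2^{b}<p$ and therefore $b<\lceil\log_2 p\rceil$. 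We also need to verify that $a\in\{b,b+1\}$, and not a mismatched parity, is what arises, and to confirm the exponent bookkeeping.

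The main obstacle is the $u=1$ subcase: pinning $y=1$ requires a clean combination of LTE and size comparison, and I would try bounding $(2p+1)^y-1>2p^{y+1}$ for $y\ge 2$ directly.
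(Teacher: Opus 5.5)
Your overall decomposition is the right one. In the non-trivial split your route is leaner than the paper's. From $v-u\mid v^y-u^y=2p^{x/2}$ and $p\nmid v-u$ you get $v-u=2$ directly, without the paper's lemma $\gcd(s-r,S)=1$ or the condition $S\ge 3$. Your binomial bound uses only $a\le b+1$, so it covers $a=b$ and $a=b+1$ at once, and you need neither the AM--GM pairing nor Remark~\ref{R2.8}. (In that expansion the sum is over odd $j$, not even $j$; the bound comes from the $j=1$ term $(2b+1)(2k)^{2b}$.)

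However, two justifications are wrong as stated. First, in the case $y$ even with $p=3$, neither modulus $3$ nor $9$ rules out $3^x-1=2\cdot 7^{y/2}$. Modulo $3$ both sides are $2$, and modulo $9$ you only get a congruence condition on $y/2$; LTE at the prime $2$ is also consistent. The missing step is reduction modulo $7$: since $y/2\ge 1$ you would need $3^x\equiv 1\pmod 7$, but the order of $3$ modulo $7$ is $6$, contradicting $x$ odd. Second, in the non-trivial split your reason for $p\nmid v-u$ (``a factor $p$ would give $p\mid u,v$'') is false. The hypothesis $p\mid v-u$ only yields $u^2\equiv uv=2p+1\equiv 1\pmod p$, which is no contradiction. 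The correct reason is size, as in the paper: $u\ge 3$ forces $v\le (2p+1)/3<p$, hence $0<v-u<p$.

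The $u=1$ case, which you flag as the main obstacle, is not actually proved. The inequality you propose, $(2p+1)^y-1>2p^{y+1}$, is false in general: for $p=7$, $y=3$ one has $3374<4802$. The exponent on $p$ is $x/2\le (y+1)/2$, not $y+1$, and with the correct exponent size alone suffices. For odd $y\ge 3$,
$(2p+1)^y-1>(2p)^y\ge 2p^y>2p^{(y+1)/2}\ge 2p^{x/2}$.
Hence $y=1$, and then $2p=2p^{x/2}$ gives $x=2$ and $z=p+1$. This is simpler than the paper, which uses LTE ($v_p(2b+1)=b$, Proposition~\ref{P2.5}) for $a=b+1$ and a separate size estimate for $a=b$. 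With these three repairs your argument proves the statement.
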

This reduction sharply limits the search for any exceptional solutions.
\section{Preliminary Results}
We prove the theorems using elementary number theory. In this section, we first state the Lifting-the-Exponent Lemma (LTE), which will be useful in our proofs. We then establish several preliminary results that will be used in the proofs of our main theorems.
\begin{lemma}  \label{lemma3}
\normalfont
Let $p$ be an odd prime, and let $a,b \in \Z$ satisfy $p | (a-b), \,\, p \nmid ab$. Then
\begin{equation*}
 v_p(a^n-b^n)=v_p(a-b)+v_p(n).   
\end{equation*}
\end{lemma}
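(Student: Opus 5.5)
We will prove Lemma~\ref{lemma3} by first treating two special cases, $p\nmid n$ and $n=p$, and then combining them by induction on $v_p(n)$. Both special cases rest on the factorization
\begin{equation*}
a^n-b^n=(a-b)\,S_n(a,b),\qquad S_n(a,b)=\sum_{i=0}^{n-1}a^{\,n-1-i}b^{\,i},
\end{equation*}
so it suffices to compute $v_p(S_n(a,b))$. Note that $p\mid(a-b)$ forces $a\equiv b\pmod p$, and $p\nmid ab$ means $a$ and $b$ are units modulo $p$.

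\emph{Step 1: $p\nmid n$.} Reducing modulo $p$ and using $a\equiv b$, each term of $S_n(a,b)$ is congruent to $a^{n-1}$. Hence
\begin{equation*}
S_n(a,b)\equiv n\,a^{n-1}\not\equiv 0\pmod p,
\end{equation*}
so $v_p(a^n-b^n)=v_p(a-b)=v_p(a-b)+v_p(n)$.

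\emph{Step 2: $n=p$.} We will show $v_p(S_p(a,b))=1$ exactly, and this is the step that uses that $p$ is odd. Write $a=b+d$ with $p\mid d$. Since $p^2\mid d^2$, the binomial theorem gives
\begin{equation*}
a^k\equiv b^k+k\,b^{k-1}d \pmod{p^2}\quad\text{for every }k\ge 0 .
\end{equation*}
Substituting this into $S_p(a,b)$ gives
\begin{align*}
S_p(a,b) &\equiv p\,b^{p-1}+d\,b^{p-2}\sum_{i=0}^{p-1}(p-1-i) \\
&= p\,b^{p-1}+d\,b^{p-2}\,p\cdot\tfrac{p-1}{2}\pmod{p^2}.
\end{align*}
Because $p$ is odd, $\tfrac{p-1}{2}$ is an integer. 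Since $p\mid d$, the second term is divisible by $p^2$. Therefore $S_p(a,b)\equiv p\,b^{p-1}\pmod{p^2}$, and since $p\nmid b$ this gives $v_p(S_p(a,b))=1$. Hence $v_p(a^p-b^p)=v_p(a-b)+1$.

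\emph{Step 3: general $n$.} Write $n=p^k m$ with $p\nmid m$, and induct on $k$. The key observation is that the hypotheses are preserved under $(a,b)\mapsto(a^p,b^p)$: we have $p\mid a^p-b^p$ and $p\nmid a^pb^p$. Applying Step 2 $k$ times gives
\begin{equation*}
v_p\bigl(a^{p^k}-b^{p^k}\bigr)=v_p(a-b)+k .
\end{equation*}
Applying Step 1 to the pair $(a^{p^k},b^{p^k})$ with exponent $m$ then gives $v_p(a^n-b^n)=v_p(a-b)+k=v_p(a-b)+v_p(n)$.

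The only delicate point is Step 2. The vanishing of the cross term modulo $p^2$ relies on $\tfrac{p-1}{2}$ being an integer, which is exactly where oddness of $p$ enters; for $p=2$ the statement indeed fails.
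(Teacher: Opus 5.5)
The paper gives no proof of this lemma. It quotes it as the standard Lifting-the-Exponent lemma and uses it only through Corollary~\ref{corollary1}, with $a=2p+1$ and $b=1$. So there is no argument of the paper's to compare against.

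Your proof is the standard textbook one: treat $p\nmid n$ and $n=p$ via the cofactor $S_n(a,b)$, then iterate $(a,b)\mapsto(a^p,b^p)$. It is correct and complete. You also locate correctly where oddness of $p$ enters, namely the integrality of $\tfrac{p-1}{2}$ in Step 2.

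Two cosmetic points:
\begin{itemize}
\item The $k=0$ instance of your congruence $a^k\equiv b^k+k\,b^{k-1}d \pmod{p^2}$, which is the $i=p-1$ term of $S_p(a,b)$, formally involves $b^{-1}$. It is cleaner to say that the linear term is simply absent there.
\item The statement tacitly assumes $n\ge 1$ and $a\neq b$. If $a=b$, both sides equal $+\infty$, which your argument also accommodates.
\end{itemize}
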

In this paper, we discuss the above lemma with $a=q=2p+1$ and $b=1$. Since $v_p(q-1) = v_q(2q)=1$, we immediately obtain the following consequence.
\begin{corollary} \label{corollary1}
\normalfont For every integer $b \geq 0$, we have 
\begin{equation*}
v_p\!\left((2p+1)^{2b+1}-1\right) = 1+v_p(2b+1).
\end{equation*}
\end{corollary}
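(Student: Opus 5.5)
This follows at once from Lemma~\ref{lemma3} applied with $a=q:=2p+1$, $b=1$ and $n=2b+1$. The work is only in checking the hypotheses and computing $v_p(q-1)$. Note the clash of notation: the exponent $n=2b+1$ uses the integer $b\ge 0$ from the statement, while the second argument of the lemma is the constant $1$.

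First, the hypotheses. The difference is $a-b=q-1=2p$, which is divisible by $p$, so $p\mid(a-b)$. The product is $ab=q=2p+1\equiv 1\pmod p$, so $p\nmid ab$. Since $p$ is odd, every hypothesis of Lemma~\ref{lemma3} holds.

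Second, the valuation of the difference. We have $v_p(q-1)=v_p(2p)=v_p(2)+v_p(p)=0+1=1$, again because $p$ is odd. (The text preceding the corollary writes this as $v_q(2q)$; the intended quantity is $v_p(2p)=1$.)

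Third, apply the lemma with $n=2b+1$. It gives
\begin{equation*}
v_p\!\left((2p+1)^{2b+1}-1\right)=v_p(2p)+v_p(2b+1)=1+v_p(2b+1).
\end{equation*}

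The boundary case $b=0$ can be checked directly: the left side is $v_p(2p)=1$, and the right side is $1+v_p(1)=1$, so they agree. There is no real obstacle. The only point needing care is that the lemma is applied to the pair $(q,1)$ rather than to the parameter $b$ named in the corollary.
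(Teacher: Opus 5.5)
Your proposal is correct and is the paper's proof: apply Lemma~\ref{lemma3} to the pair $(2p+1,1)$ with $n=2b+1$, using $v_p(2p)=1$ since $p$ is odd. You are also right that the paper's $v_q(2q)$ is a typo for $v_p(2p)$.
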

\begin{proof}
We apply Lemma~\ref{lemma3} with $a=2p+1, \, b=1,\, n=2b+1$. Since $v_p\!\left((2p+1)-1\right)=v_p(2p)=1$, the result follows immediately.
\end{proof}
Throughout the paper, let $p$ be an odd prime, $q:=2p+1$, and
$x,y,z$ be positive integers that satisfy $\displaystyle p^{x}+q^{y}=z^{2}$ with $|x-y|=1.$ 
Since $x$ and $y$ are consecutive integers, they have opposite parity. Therefore, exactly one of $x$ and $y$ is even and the other is odd. 
\subsection{Case A: x odd, y even}
In this case, we write $x=2a+1$ and $y=2b$, where $a,b\ge0$. The condition \(|x-y|=1\) then gives rise to exactly two sub-cases: $y=x-1$, which is equivalent to $b=a$, and $y=x+1$, which is equivalent to $b=a+1$. 
In either sub-case, the equation $p^x+q^y=z^2$ takes the form $p^{2a+1}+q^{2b}=z^2,$ or equivalently, $p^{2a+1}=z^2-q^{2b}=(z-q^b)(z+q^b)$. We first show that the two factors on the left-hand side are coprime. 
Let $d=\gcd(z-q^b,\,z+q^b)$. Then $d\mid\bigl((z+q^b)-(z-q^b)\bigr)=2q^b$ and $d\mid p^{2a+1}$. 
Since $p$ is odd, $d$ is also odd. Hence $d\mid q^b$. Therefore,
$d\mid p^{2a+1} \,\, \text{and} \,\, d\mid q^b$. Since $\gcd(p,q)=1$, we conclude that $d=1$. 
Thus, $\gcd(z-q^b,\,z+q^b)=1$. Since $z^2-q^{2b}=(z-q^b)(z+q^b)$ is a coprime factorization of a prime power, one of the two factors must be $1$.
As $z+q^b > z-q^b >0$ it follows that $z-q^b=1$ and $z+q^b=p^{2a+1}$. 
By subtracting the two equations, we obtain the following. 
\begin{equation}\label{equation2}
 p^{2a+1}-1=2q^b   
\end{equation}
First, we consider the sub-case of $y=x+1$, (equivalently, $b=a+1$), which gives us the equation $p^{2a+1}-1=2q^{a+1}$. The following Lemma shows that this equation has no solution.
\begin{lemma} \label{lemma1}
\normalfont Let $p$ be an odd prime, and $q=2p+1$. Then equation 
\begin{equation*}
p^{2a+1}-1=2q^{a+1}  
\end{equation*}
 has no solution for $a  \geq 0$.
\end{lemma}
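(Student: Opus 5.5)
The approach is to reduce the equation modulo $p$, which pins down $p$ completely, and then to dispose of the single remaining prime by a second congruence. No size estimates or LTE are needed here; Corollary~\ref{corollary1} is not used.

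\emph{Step 1 (reduction mod $p$).} Suppose $p^{2a+1}-1=2q^{a+1}$ with $a\ge 0$. Since $2a+1\ge 1$, the left-hand side is congruent to $-1$ modulo $p$. Since $q=2p+1\equiv 1 \pmod p$, the right-hand side is congruent to $2$ modulo $p$. Hence $p\mid 3$, so $p=3$ and $q=7$, and the equation becomes
\begin{equation*}
3^{2a+1}-1=2\cdot 7^{a+1}.
\end{equation*}

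\emph{Step 2 (the case $p=3$, reduction mod $7$).} The right-hand side is divisible by $7$, so $3^{2a+1}\equiv 1 \pmod 7$. The multiplicative order of $3$ modulo $7$ is $6$, because $3^2\equiv 2$ and $3^3\equiv 6\equiv -1$. Therefore $6\mid 2a+1$, which is impossible since $2a+1$ is odd. Hence there is no solution for any $a\ge 0$, including the boundary case $a=0$, where the equation would read $p-1=2q$ and fails trivially.

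\emph{Expected obstacle.} The only point needing care is the residual prime $p=3$ that survives Step 1, since the mod-$p$ reduction alone does not rule it out. I would first try reducing modulo $q=7$ as in Step 2. If that had not worked, I would have compared growth rates, since $3\cdot 9^a$ versus $14\cdot 7^a+1$ can only balance for small $a$, and then checked those few values directly.
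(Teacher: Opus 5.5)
Your proof is correct and follows the paper's argument exactly: reduce modulo $p$ to force $p=3$ and $q=7$, then reduce modulo $7$ and use that $3$ has order $6$ there, which gives the impossible divisibility $6\mid 2a+1$. Your separate remark on $a=0$ is harmless but unnecessary, since the mod-$7$ argument already covers that case because $a+1\ge 1$.
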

\begin{proof}
Suppose, to the contrary, that $p^{2a+1}-1=2q^{a+1}$. Now, reducing the above equation modulo $p$, we obtain $-1\equiv 2q^{a+1} \equiv 2 \pmod p$. Thus $p\mid3$. 
As $p$ is an odd prime, we must have $p=3$, and consequently $q=7$. The equation becomes $3^{2a+1}-1=2\cdot7^{a+1}$. 
Now, reducing this equation modulo $7$, we obtain $3^{2a+1}-1 \equiv 0 \pmod 7$.
Since the multiplicative order of $3$ modulo $7$ is $6$, this gives $6 | (2a+1)$. This is impossible because $2a+1$ is odd. Hence no such $a$ exists.
\end{proof}
Next, we consider the sub-case $(y=x-1)$, (equivalently, $b=a$), with $a \ge 1$.
In this case, the equation~(\ref{equation2}) becomes $p^{2a+1}-1=2q^{a}$.
We show that the equation~(\ref{equation2}) has no solution for this sub-case as well.
\begin{lemma} \label{lemma2}
\normalfont
Let $p$ be an odd prime and let $q=2p+1$. Then the equation
\begin{equation*}
 p^{2a+1}-1=2q^a   
\end{equation*}
has no solution for $a \ge 1$.    
\end{lemma}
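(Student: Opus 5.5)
The plan is to copy the strategy of Lemma~\ref{lemma1}: first reduce modulo $p$ to force $p=3$, then reduce modulo $q=7$ to get a contradiction from the order of $3$ modulo $7$.

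Suppose, for contradiction, that $p^{2a+1}-1=2q^{a}$ for some $a\ge 1$. Since $q=2p+1\equiv 1 \pmod p$, we have $q^a\equiv 1\pmod p$. Reducing the equation modulo $p$ therefore gives
\begin{equation*}
-1\equiv 2q^a\equiv 2 \pmod p,
\end{equation*}
so $p\mid 3$. Because $p$ is an odd prime, this forces $p=3$ and $q=7$, and the equation becomes $3^{2a+1}-1=2\cdot 7^{a}$.

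Here the assumption $a\ge 1$ is used: it makes $7$ divide the right-hand side. Hence $3^{2a+1}\equiv 1\pmod 7$. The multiplicative order of $3$ modulo $7$ is $6$, since the powers of $3$ run through $3,2,6,4,5,1$. So we would need $6\mid 2a+1$, which is impossible because $2a+1$ is odd. This contradiction shows there is no solution.

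I do not expect a real obstacle. The only point needing care is the restriction $a\ge1$, which is exactly what supplies the factor of $7$ used in the last step. The case $a=0$ reads $p-1=2$, i.e.\ $p=3$, and that is the excluded case the paper treats separately.
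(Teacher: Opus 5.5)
Your proposal is correct and follows the paper's proof exactly: reduce modulo $p$ to force $p=3$, $q=7$, then use $a\ge 1$ to get $3^{2a+1}\equiv 1 \pmod 7$, which contradicts the fact that $3$ has order $6$ modulo $7$ while $2a+1$ is odd. You also state explicitly why $a\ge1$ is needed and note the excluded case $a=0$, $p=3$, which the paper likewise handles separately.
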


\begin{proof}
The proof follows exactly the same argument as that of Lemma~\ref{lemma1}. Reducing the equation modulo $p$ gives $p=3$, and hence $q=7$. 
Next, reducing this equation modulo 7, we obtain $3^{2a+1}\equiv1\pmod7$. Since the order of $3$ modulo $7$ is $6$, it follows that $6\mid(2a+1)$, which is impossible.     
\end{proof}
Hence the equation~(\ref{equation1}) has no solution for case A. Note that if $a=0$ then $x=1$ and $y=0$. 
The equation $ p^{2a+1}-1=2q^a$ becomes $p-1=2$ and and hence $p=3$. 
Therefore, the  equation~(\ref{equation1})  has the solution $(p,x,y,z)=(3,1,0,2)$. Since $y$ is assumed to be positive in Theorem~\ref{A}, this case is excluded from the proof Lemma~\ref{lemma2}.

\subsection{Case B: x even, y odd:} In this case, we write $x=2a$ and $y=2b+1$, where $a \ge1 $ and $b \ge 0$.  Then
\begin{equation*}
q^{2b+1} = z^2-p^{2a} = (z-p^a)(z+p^a).    
\end{equation*}
We now consider two cases. If one of the factors is equal to $1$, we call it the \emph{trivial split}. 
Otherwise, both factors are greater than $1$, and we call it the \emph{non-trivial split}. We first discuss the trivial split.

In case of a trivial split, we have $z-p^a=1$ and $z+p^a=q^{2b+1}$. Subtracting the first equation from the second, we obtain 
\begin{equation} \label{equation3}
 q^{2b+1}-1=2p^a.   
\end{equation}
Since $x=2a$, $y=2b+1$, and $|x-y|=1$, there are two sub-cases:
\begin{enumerate}
\item[(i)] $y=x-1$, which is equivalent to $a=b+1$;
\item[(ii)] $y=x+1$, which is equivalent to $a=b$.
\end{enumerate}
For sub-case (i), the equation~(\ref{equation3}) becomes $(2p+1)^{2b+1}-1=2p^{\,b+1}$. The equation admits a solution for $b=0$ corresponding to $(x,y,z)=(2,1,p+1)$. The following proposition shows that this is the only solution.  
\begin{proposition}\label{P2.5}
\normalfont For every odd prime $p$, the equation $(2p+1)^{2b+1}-1=2p^{\,b+1}$
has a unique solution $b=0$,  giving $(x,y,z)=(2,1,p+1)$.
\end{proposition}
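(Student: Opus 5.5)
The plan is to compare the two sides of $(2p+1)^{2b+1}-1=2p^{\,b+1}$ in two ways: first by their $p$-adic valuations, using Corollary~\ref{corollary1}, and then by plain size. Either comparison alone should already force $b=0$; I will use the valuation argument as the main line and keep the size estimate as a backup.

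\emph{Valuation step.} By Corollary~\ref{corollary1}, the left-hand side has $v_p\bigl((2p+1)^{2b+1}-1\bigr)=1+v_p(2b+1)$. The right-hand side clearly has $v_p(2p^{b+1})=b+1$, since $p$ is odd. Equating the two gives $v_p(2b+1)=b$.

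If $b\ge 1$, this means $p^b\mid 2b+1$, so $3^b\le p^b\le 2b+1$. Since $3^b>2b+1$ for all $b\ge 2$, only $b=1$ can survive, and then $p^1\mid 3$ forces $p=3$. I dispose of this remaining case by direct computation: $7^3-1=342\neq 18=2\cdot 3^2$. Hence $b=0$.

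Conversely, $b=0$ gives $(2p+1)-1=2p$, which holds for every odd prime $p$. Substituting back, $a=b+1=1$, so $x=2$, $y=1$, and $z=p^a+1=p+1$.

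\emph{Size check.} For $b\ge 1$ we have
\[
(2p+1)^{2b+1}-1\ \ge\ (2p+1)^{b+1}(2p+1)^{b}-1\ >\ (2p+1)^{b+1}\ >\ 2p^{b+1},
\]
since $(2p+1)^b\ge 2p+1>2$ and $(2p+1)^{b+1}>2p^{b+1}$. So equality is impossible whenever $b\ge 1$. This confirms the conclusion independently of the LTE argument.

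The only real obstacle is the small exceptional case $b=1$, $p=3$ left over by the valuation argument. It is settled either by the explicit computation above or by the size inequality.
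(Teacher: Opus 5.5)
Your argument is correct, and its main line matches the paper's proof: Corollary~\ref{corollary1} gives $v_p(2b+1)=b$, hence $p^b\le 2b+1$. That leaves only $b=1$, $p=3$, which is ruled out by $7^3-1=342\neq 18$; your appeal to $3^b>2b+1$ for $b\ge 2$ plays the role of the paper's induction showing $p^b>2b+1$. Your size check is also valid and is a complete proof on its own, since $(2p+1)^{2b+1}-1>(2p+1)^{b+1}>2p^{b+1}$ for $b\ge 1$, so the LTE step is not actually needed here.
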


\begin{proof}
If $b=0$, then the equation becomes $(2p+1)-1=2p$, which holds. This gives the solution $(x,y,z)=(2,1,p+1)$ because $p^2+(2p+1)=(p+1)^2$. 
Now suppose that $b\ge1$. Since $q=2p+1\equiv1\pmod p$ and $v_p(q-1)=v_p(2p)=1$, from corollary~(\ref{corollary1}), we obtain $v_p(q^{2b+1}-1)=1+v_p(2b+1).$ 
By comparing the $p$-adic valuations of both sides of $q^{2b+1}-1=2p^{\,b+1}$, we obtain $1+v_p(2b+1)=b+1$, or equivalently, $v_p(2b+1)=b$. 
Hence $p^b\mid(2b+1)$ which implies $p^b\le2b+1$. If $b=1$, then $p\le3$. 
Since $p$ is an odd prime, we must have $p=3$, but $7^3-1=342\neq18=2\cdot3^2$, which is impossible. 
Next, for $b \ge 2$,  we claim that $p^b>2b+1$. This follows by induction on $b$. The statement is true for $b=2$, since $p^2\ge9>5$. 
Assume that $p^b>2b+1$ for some $b\ge2$. Then
\begin{equation*}
    p^{b+1}\ge3p^b>3(2b+1)>2(b+1)+1.
\end{equation*}
Therefore the equation $(2p+1)^{2b+1}-1=2p^{\,b+1}$ has no solution for $b \geq 1$ and an unique solution at $b=0$.    
\end{proof}

We now consider sub-case (ii), where $a=b$. In this case, equation~(\ref{equation3}) becomes $(2p+1)^{2b+1}-1=2p^{b}$. 
This sub-case also leads to no solution of equation~(\ref{equation3}).

\begin{lemma}
\normalfont For every odd prime $p$, the equation $q^{2b+1}-1 = 2p^b$ has no solution for any integer $b\ge0$.
\end{lemma}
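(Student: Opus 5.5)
The natural approach is to compare $p$-adic valuations, exactly as in Proposition~\ref{P2.5}. The only difference is that the right-hand side now carries $p^b$ instead of $p^{b+1}$.

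First I would dispose of $b=0$ directly. The equation becomes $q-1=2p=2p^0=2$, which forces $p=1$; this is not a prime, so $b=0$ gives no solution.

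For $b\ge1$, I would invoke Corollary~\ref{corollary1}, which gives
\begin{equation*}
v_p\bigl(q^{2b+1}-1\bigr)=1+v_p(2b+1).
\end{equation*}
On the right-hand side, $v_p(2p^b)=b$ because $p$ is odd. Equating the two valuations yields $v_p(2b+1)=b-1$.

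When $b=1$ this condition reads $v_p(3)=0$, which holds whenever $p\neq3$, so the valuation argument alone does not rule this case out. Here I would fall back on size. The equation becomes $q^3-1=2p$, which is impossible since $q^3-1\ge q^2>2p$. More generally, a size comparison settles every $b\ge0$ in one stroke:
\begin{equation*}
q^{2b+1}-1\ \ge\ (2p+1)\,q^{2b}-1\ >\ 2p\cdot p^{b}\ \ge\ 2p^{b}.
\end{equation*}
This holds because $q^{2b}\ge q^{b}>p^{b}$. The strict inequality needs a small check at $b=0$, where the left side equals $2p$ and the right side equals $2$, so it still holds since $p>1$.

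The cleanest writeup is therefore simply the inequality $q^{2b+1}-1>2p^{b}$ for all $b\ge0$. I would present this as the proof and mention the valuation route only as a remark. No step here is a serious obstacle; the one point requiring care is the $b=0$ edge case, where the inequality degenerates to $2p>2$.
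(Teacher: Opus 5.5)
Your proof is correct and is essentially the paper's argument: after disposing of $b=0$, the paper also just shows $(2p+1)^{2b+1}-1>2p^b$ by a size comparison, splitting $b=1$ from $b\ge 2$ and using $(2p+1)^{2b+1}>(2p+1)^3p^{b-1}$, whereas your single chain based on $q^{2b}>p^b$ handles all $b\ge1$ at once, and your valuation detour is not needed. Note only that at $b=0$ the middle strict inequality in your chain is actually the equality $2p=2p$, so the strictness there must come from the final step $2p>2$; this is the $b=0$ check you flag, and it is sufficient.
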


\begin{proof}
If $b=0$, then the equation becomes $2p=2$, which is impossible. Now suppose that $b\ge1$. Since $2b+1\ge3$, we have  
\begin{equation*}
(2p+1)^{2b+1}-1\ge(2p+1)^3-1=8p^3+12p^2+6p.    
\end{equation*}
Moreover,
\begin{equation*}
    8p^3+12p^2+6p-2p^b = p\left(8p^2+12p+6-2p^{\,b-1}\right).
\end{equation*}
If $b=1$, then $8p^2+12p+6-2=8p^2+12p+4>0$. Now, let $b\ge2$. Since
\begin{equation*}
(2p+1)^{2b+1}\ge(2p+1)^3(2p+1)^{2b-2} \,\, \text{and} \,\,  (2p+1)^2>p,   
\end{equation*}
it follows that $(2p+1)^{2b+1}>(2p+1)^3p^{b-1} > 2p \cdot p^{b-1} = 2 p^b $. Therefore $(2p+1)^{2b+1}-1>2p^b$ and hence the equation has no solution.
\end{proof}
The trivial split produces only the solution $(x,y,z)=(2,1,p+1)$. Next, we consider non-trivial split case. 
In this case, both factors in $q^{2b+1}=(z-p^a)(z+p^a)$ are greater than $1$. Since $\gcd(z-p^a,z+p^a)=1$, every prime power dividing $q^{2b+1}$ must be assigned entirely to one of the two factors.
Thus there exist coprime integers $r$ and $s$ satisfying $q=rs$, with $1<r<s$,
such that $z-p^a=r^{\,2b+1}$ and $z+p^a=s^{\,2b+1}$. 
The difference of these two equations gives $s^{\,2b+1}-r^{\,2b+1}=2p^a$. 
Now, from the factorization of $s^{\,2b+1}-r^{\,2b+1}$, we obtain 
\begin{equation}\label{eq4}
(s-r)\sum_{i=0}^{2b}s^{\,2b-i}r^i=2p^a.
\end{equation}
For our convenience, let $\displaystyle S=\sum_{i=0}^{2b}s^{\,2b-i}r^i$. Then equation~\eqref{eq4} becomes $(s-r)S=2p^a$. Next, we want determine the possible values of $s - r$.
\begin{lemma}
\normalfont The gcd of $s-r$ and $S$ is $1$.    
\end{lemma}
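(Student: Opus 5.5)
We will show directly that any common prime divisor of $s-r$ and $S$ must divide $q=rs$, and that this contradicts $\gcd(r,s)=1$. Let $d=\gcd(s-r,S)$ and suppose a prime $\ell$ divides $d$. Since $(s-r)S=2p^a$, we know $\ell\in\{2,p\}$.

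First we rule out $\ell=2$. Both $r$ and $s$ divide $q=2p+1$, which is odd, so $r$ and $s$ are odd. The sum $S=\sum_{i=0}^{2b}s^{2b-i}r^i$ has $2b+1$ terms, each odd, so $S$ is odd. Hence $2\nmid S$, and therefore $\ell\neq 2$.

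Now suppose $\ell=p$. From $\ell\mid s-r$ we get $s\equiv r\pmod{\ell}$. Substituting this into $S$ gives
\begin{equation*}
S\equiv \sum_{i=0}^{2b} r^{2b} = (2b+1)\,r^{2b}\pmod{\ell}.
\end{equation*}
Since $\ell\mid S$, it follows that $\ell\mid (2b+1)r^{2b}$. There are two possibilities. If $\ell\mid r$, then $\ell\mid s$ as well, because $s\equiv r\pmod \ell$. This contradicts $\gcd(r,s)=1$. More simply, $\ell=p$ divides neither $r$ nor $s$, since $rs=2p+1\equiv1\pmod p$. So we must have $p\mid 2b+1$.

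This remaining case, $p\mid 2b+1$, is the main obstacle: the congruence argument alone does not exclude it. To handle it we use the identity $S=\dfrac{s^{2b+1}-r^{2b+1}}{s-r}$ and apply Lemma~\ref{lemma3} to $a=s$, $b=r$, $n=2b+1$. This is valid because $p\mid s-r$ and $p\nmid sr$. It yields
\begin{equation*}
v_p(S)=v_p(2b+1)\ge1.
\end{equation*}
So a genuine common factor $p$ would occur exactly when $p\mid s-r$ and $p\mid 2b+1$. We now rule this out by bounding $s-r$. From $rs=2p+1$ and $1<r<s$, we have $r\ge3$. Then $s\le(2p+1)/3<p$, so $0<s-r<p$. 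Hence $p\nmid s-r$, which contradicts the assumption $\ell=p$.

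Therefore no prime divides $d$, and $\gcd(s-r,S)=1$. Combined with $(s-r)S=2p^a$, the parity of $S$, and the bound $0<s-r<p$, this forces $s-r=2$ and $S=p^a$. This is the input needed for the factorization $2p+1=(2k-1)(2k+1)$ used in Theorem~\ref{A}.
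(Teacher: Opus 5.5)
Your proof is correct, and its decisive step is the same as the paper's: $r\ge 3$ gives $s\le(2p+1)/3<p$, hence $0<s-r<p$ and $p\nmid s-r$.

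The two proofs differ in how they exclude the prime $2$. The paper first uses $S\equiv(2b+1)r^{2b}\pmod{s-r}$ and $\gcd(r,s-r)=1$ to get $d\mid 2b+1$, so $d$ is odd and therefore a power of $p$. You instead observe directly that $S$ is a sum of $2b+1$ odd terms.

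In your case $\ell=p$, the congruence computation, the reduction to $p\mid 2b+1$, and the LTE evaluation $v_p(S)=v_p(2b+1)$ are never actually used. Since $\ell\mid d\mid s-r$ and $0<s-r<p$, the case $\ell=p$ is impossible from the outset, so $p\mid 2b+1$ is not really ``the main obstacle.''

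With that detour removed, your argument is shorter than the paper's: $S$ odd gives $d\mid p^a$, and $\gcd(s-r,p)=1$ gives $d=1$. Your parity observation also streamlines what follows. Since $r,s$ are odd, $s-r$ is even, and together with $s-r<p$ and $s-r\mid 2p^a$ this forces $s-r=2$ at once. That avoids the paper's appeal to $S\ge 3$ and its separate non-integrality argument for excluding $s-r=1$.
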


\begin{proof}
Let $d=\gcd(s-r,S)$. Since $s\equiv r \pmod{s-r}$,  we have $S\equiv\sum_{i=0}^{2b}r^{2b} =(2b+1)r^{2b} \pmod{s-r}$. 
Hence $d\mid(2b+1)r^{2b}$. Since $\gcd(r,s-r)=\gcd(r,s)=1$, it follows that
$\gcd(r^{2b},s-r)=1$. Therefore, $d\mid(2b+1)$. 

As $d \mid (s-r) $ and $d\mid S$, we have $d^{\,2} \mid (s-r) S = 2 p^a$. In addition, $d$ is odd, and it follows that $d^{\,2}\mid p^a$.
Therefore, $d=p^t$ for some integer $t\ge0$. If $d>1$, then $d\ge p$, and since $d\mid(s-r)$, we obtain $s-r\ge d\ge p$.  But $r \ge 3$, as $r \mid q$ and $r >1$, so we have
\begin{equation*}
s=\frac{2p+1}{r}\le\frac{2p+1}{3}<p.
\end{equation*} 
Therefore $s-r<s<p\le d\le s-r$, which is impossible. Hence $d=1$.
\end{proof}

Since $\gcd(s-r,S)=1$ and $(s-r)S=2p^a$, therefore $2p^a$ has a co-prime factorization. 
If $b \ge 1$, then $S= \displaystyle \sum_{i=0}^{2b}s^{2b-i}r^i\ge 2b+1\ge3$. 
Hence the only possible coprime factorizations are $(s-r,S)=(1,2p^a)$ or $(s-r,S)=(2,p^a)$. Therefore $s-r\in\{1,2\}$. 
Now, suppose $s-r=1$, equivalently, $s=r+1$. since $rs=2p+1$, we obtain $p = \frac{r(r+1)-1}{2}$.
However, $r(r+1)$ is even, so $r(r+1)-1$ is odd. Hence $\frac{r(r+1)-1}{2}$ is not an integer, which is a contradiction. Therefore, $s-r\neq1$ and hence $s-r=2$.

Since both r and s are divisors of an odd integer and $s-r=2$,  we can write $s=2k-1$ and $r=2k+1$ for some $k \geq 2$. From $rs=2p+1$, we obtain $(2k-1)(2k+1)=2p+1$, which simplifies to $4k^2-1=2p+1$. 
Hence $p=2k^2-1$. Thus every non-trivial solution must satisfy $2p+1=(2k-1)(2k+1)$, where $p=2k^2-1$ is prime.

\begin{remark}\label{R2.8}
In the sub-case $a=b$ (equivalently, $y=x+1$), the sum $S= \displaystyle \sum_{i=0}^{2b}s^{2b-i}r^i$ contains the term $s^br^b=(rs)^b=(2p+1)^b>p^b$. Hence $S>(2p+1)^b>p^b=p^a$,
which is impossible. Therefore the sub-case $a=b$ has no solution.
\end{remark}

Therefore, the only remaining subcase is $a=b+1$. Now, if we substitute $r=2k-1$, $s=2k+1$ and $p=2k^2-1$ into the equation $s^{\,2b+1}-r^{\,2b+1}=2p^a$, then we obtain
\begin{equation*}
(2k+1)^{\,2b+1}-(2k-1)^{\,2b+1} = 2(2k^2-1)^{\,b+1}	
\end{equation*}
\section{Proof of the main result}
Now, using the results proved above, we now obtain the main theorem. The proof is divided into the trivial and non-trivial factorization cases, giving parts (i) and (ii), respectively.

\begin{theorem}\label{T3.1}
\normalfont Let $p$ be an odd prime and suppose that
\begin{equation*}
	p^x + \left ( 2p +1 \right)^y = z^2,
\end{equation*}
where $x$, $y$, $z$ are positive integers with $|x-y|=1$. Then exactly one of the following holds.
\begin{enumerate}
	\item[(i)] The solution is $(x, y, z) = (2, 1, p + 1)$.
	\item[(ii)] The integer $2p + 1$ is composite and admits the coprime factorization 
	\begin{equation*}
		2p + 1 = (2k - 1)(2k + 1),
	\end{equation*}
	where $p=2k^2-1$ and $(2k + 1)^{2b+1} - (2k - 1)^{2b+1} = 2(2k^2 - 1)^a$, with $(2b+1)2^b<2k^2-1$. 
	Consequently, $ b< \left\lceil \log_2(2k^2-1) \right\rceil$ leaving only a finite logarithmic range of exponents for each fixed prime. .
\end{enumerate}

\end{theorem}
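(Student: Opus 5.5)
The plan is to assemble the case analysis of Section 2 and then supply the one new ingredient, the bound $(2b+1)2^b<2k^2-1$. By parity of $x,y$ there are two cases. In Case A ($x$ odd, $y$ even) the factorization $p^{2a+1}=(z-q^b)(z+q^b)$ is coprime, which forces equation~\eqref{equation2}. Lemma~\ref{lemma1} and Lemma~\ref{lemma2} then rule out both sub-cases, since $y\ge 1$ excludes $a=0$ in the second. In Case B ($x=2a$, $y=2b+1$) I split into the trivial and non-trivial splits. For the trivial split, Proposition~\ref{P2.5} and the subsequent lemma show that the only solution is $b=0$, that is, $(x,y,z)=(2,1,p+1)$, which is alternative (i). For the non-trivial split, the lemmas show $\gcd(s-r,S)=1$ and $s-r=2$, so $2p+1=(2k-1)(2k+1)$ with $p=2k^2-1$, and Remark~\ref{R2.8} forces $a=b+1$. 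This yields the displayed equation in (ii). In this case $2p+1$ is composite because $k\ge2$ gives both factors $>1$.

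One gap needs checking: the derivation of $s-r\in\{1,2\}$ assumed $b\ge1$. If $b=0$ in the non-trivial split, then $s-r=2p^a$ with $rs=2p+1$, which is impossible because $s-r<s\le (2p+1)/3<2p$. So $b\ge1$ throughout (ii). For ``exactly one'': in case (i) we have $b=0$, while (ii) requires $b\ge1$; moreover the two splits give different values of $z-p^a$ ($1$ versus $r^{2b+1}>1$). Hence the two alternatives cannot hold simultaneously.

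The main step is the bound. Since $s-r=2$, we have $(s-r)S=2p^a$, so $S=p^{a}=p^{b+1}$. I would instead exploit $v_2$ or a size comparison. Expanding binomially,
\[
(2k+1)^{2b+1}-(2k-1)^{2b+1}=2\sum_{j\ \mathrm{even}}\binom{2b+1}{j}(2k)^{2b+1-j}\cdot 1,
\]
that is, twice the sum of the terms with $2b+1-j$ odd. Reducing modulo $k^2$ (or modulo $p=2k^2-1$), the lowest-order term is $\binom{2b+1}{2b}(2k)=(2b+1)2k$. Since $2k^2\equiv1\pmod p$, every power $(2k)^{2m+1}=2k\,(4k^2)^m\equiv 2k\cdot 2^m\pmod p$. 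Thus
\[
\tfrac12\bigl(s^{2b+1}-r^{2b+1}\bigr)\equiv 2k\sum_{m=0}^{b}\binom{2b+1}{2m+1}2^m \pmod p,
\]
and this must vanish modulo $p$. Since $\gcd(2k,p)=1$, we get $p\mid T_b:=\sum_m\binom{2b+1}{2m+1}2^m=\frac{(1+\sqrt2)^{2b+1}-(1-\sqrt2)^{2b+1}}{2\sqrt2}$.

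I expect the hard part to be converting this divisibility into the stated bound $(2b+1)2^b<2k^2-1$. My first attempt would be to use $T_b>0$ together with a bound of the shape $T_b\ge(2b+1)2^b$ (the top term alone is $2^b$ and the other terms are of comparable size), aiming to use the strong divisibility $p^{b+1}\mid\cdots$ via Corollary~\ref{corollary1}-style LTE to control the size. If that route fails, I would instead compare sizes directly: $S=p^{b+1}$, while $S=\sum s^{2b-i}r^i$ can be estimated above and below by $(2b+1)$ times powers of $4k^2-1$. From the resulting inequality I would isolate $(2b+1)2^b<2k^2-1$. Finally, taking $\log_2$ gives $b<\log_2(2k^2-1)$, hence $b<\lceil\log_2(2k^2-1)\rceil$.
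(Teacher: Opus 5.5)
Your assembly of Section 2 is sound. Your check that $b=0$ is impossible in the non-trivial split (there $S=1$, so $s-r=2p^a>s$) covers a case the paper actually skips. The gap is in the step you yourself call the main one: the bound $(2b+1)2^b<2k^2-1$ is never proved, and your primary route cannot prove it.

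First, the binomial expansion has a parity slip. In $(2k+1)^{2b+1}-(2k-1)^{2b+1}=\sum_j\binom{2b+1}{j}(2k)^{2b+1-j}\bigl(1-(-1)^j\bigr)$ the surviving terms have $j$ odd, so only even powers of $2k$ occur:
\[
S=\tfrac12\bigl(s^{2b+1}-r^{2b+1}\bigr)=\sum_{m=0}^{b}\binom{2b+1}{2m}(4k^2)^m\equiv\sum_{m=0}^{b}\binom{2b+1}{2m}2^m \pmod p .
\]
Your $T_b$ is the wrong half. For the genuine solution $k=2$, $b=1$ (where $7^4+15^3=76^2$) your formula gives $T_1=5$, which $7$ does not divide; the corrected sum is $1+6=7$.

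Second, and more fundamentally, even the corrected divisibility cannot give the bound. It only says that a positive quantity increasing in $b$ is a multiple of $p$, hence at least $p$. Pairing this with a lower bound such as $T_b\ge(2b+1)2^b$ yields nothing, because both are lower bounds on the same quantity. What you need is an upper bound on a function of $b$ in terms of $p$. The LTE idea is also unavailable: Corollary~\ref{corollary1}-type valuations need $p\mid s-r$, but here $s-r=2$.

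Your fallback is the right idea, and it is exactly the paper's argument. It has to be made specific, and only the lower bound on $S$ matters; an upper estimate for $S$ plays no role.
\begin{itemize}
\item For $0\le i\le b-1$, pair the terms $s^{2b-i}r^i$ and $s^ir^{2b-i}$. By AM--GM each pair is at least $2(rs)^b$.
\item Adding the middle term $(rs)^b$ gives $S\ge(2b+1)(rs)^b=(2b+1)(2p+1)^b$.
\item Since $S=p^{b+1}$, dividing by $p^b$ gives $p\ge(2b+1)\bigl(2+\tfrac1p\bigr)^b$.
\item This is strictly greater than $(2b+1)2^b$ because $b\ge1$, which your $b=0$ check guarantees.
\end{itemize}
With $p=2k^2-1$ this is the stated bound, and the logarithmic consequence follows as you say.
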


\begin{proof}
	
From the identity $\displaystyle X^n-Y^n=(X-Y)\sum_{i=0}^{n-1}X^{n-1-i}Y^i$,	we have 
\begin{equation*}
	(2k+1)^{2b+1}-(2k-1)^{2b+1} = 2\sum_{i=0}^{2b}(2k+1)^{2b-i}(2k-1)^i.
\end{equation*}
Since $(2k+1)^{2b+1}-(2k-1)^{2b+1} = 2(2k^2-1)^{\,b+1}$, it follows that
\begin{equation}\label{e5}
	\sum_{i=0}^{2b}(2k+1)^{2b-i}(2k-1)^i = (2k^2-1)^{\,b+1}. 
\end{equation}
For each $0\le i\le b-1$, pair the $i$-th and $(2b-i)$-th terms.  By the arithmetic geometric mean inequality, we obtain
\begin{equation*}
	(2k+1)^{2b-i}(2k-1)^i + (2k+1)^i(2k-1)^{2b-i} \ge 2(2k+1)^b(2k-1)^b.
\end{equation*}
Since there are total $b$ such pairs along with the middle term $(2k+1)^b(2k-1)^b$, we obtain
\begin{equation*}
	\sum_{i=0}^{2b}(2k+1)^{2b-i}(2k-1)^i \ge (2b+1)(2k+1)^b(2k-1)^b.
\end{equation*}
Now combining this with equation~(\ref{e5}) gives $(2k^2-1)^{\,b+1}\ge(2b+1)(4k^2-1)^b$. Dividing both sides by $(2k^2-1)^b>0$, we obtain
\begin{equation*}
	2k^2-1 \ge	(2b+1)	\left(\frac{4k^2-1}{2k^2-1}\right)^b.
\end{equation*}
Since
\begin{equation*}
	\frac{4k^2-1}{2k^2-1} = 2+\frac{1}{2k^2-1}>2,
\end{equation*}
it follows that
\begin{equation}\label{e6}
	2k^2-1	>	(2b+1)2^b.
\end{equation}	
By Proposition~\ref{P2.5}, the equation has the solution $(x,y,z)=(2,1,p+1)$. If this is not the case, then Remark~\ref{R2.8} shows that $a=b$ is not possible. Hence $a=b+1$. 
Moreover, from equation~(\ref{e6}), we get $2k^2-1>(2b+1)2^b$. Now after taking logarithms, we obtain $b<\log_2(2k^2-1)$. 
Since $b$ is an integer, therefore
\begin{equation*}
	b<\left\lceil\log_2(2k^2-1)\right\rceil.
\end{equation*}
Thus, for each fixed $k$, only finitely many values of $b$ need to be considered.
\end{proof}
\begin{remark}
For $k=2$ and $b=1$, the auxiliary equation yields the solution	$p=7$, $(x,y,z)=(4,3,76)$, since $7^4+15^3=2401+3375=5776=76^2$. 
For odd integers $b\ge3$, no solutions were found in our computations. Although this suggests that the above solution may be unique, we do not have a general proof covering all such cases. 
Determining whether the auxiliary equation admits solutions beyond the example $p=7$ remains an interesting open problem.
\end{remark}





\bibliographystyle{amsplain}
\bibliography{mybib}

\end{document}